\numberwithin{equation}{section}
\newtheorem{thm}{Theorem}[section]
\newtheorem{lem}{Lemma}[section]
\newtheorem{prop}{Proposition}[section]
\theoremstyle{definition}
\theoremstyle{remark}
\newcommand{\il}{\langle}
\newcommand{\li}{\rangle}
\begin{document}

\title[The space of compact self-shrinking solutions]{The space of compact self-shrinking solutions to the Lagrangian Mean Curvature Flow in $\mathbb C^2$}
\author{Jingyi Chen and John Man Shun Ma}
\thanks {The first author is partially supported by an NSERC grant.}

\address{Department of Mathematics,
The University of British Columbia,
Canada} 
\email{jychen@math.ubc.edu.ca}
\email{johnma@math.ubc.edu.ca}

\begin{abstract}
Let $F_n :(\Sigma, h_n) \to \mathbb C^2$ be a sequence of conformally immersed Lagrangian self-shrinkers with a uniform area upper bound to the mean curvature flow, and suppose that the sequence of metrics $\{h_n\}$ converges smoothly to a Riemannian metric $h$. We show that a subsequence of $\{F_n\}$ converges smoothly to a branched conformally immersed Lagrangian self-shrinker $F_\infty : (\Sigma, h)\to \mathbb C^2$. When the area bound is less than $16\pi$, the limit $F_\infty$ is an embedded torus. When the genus of $\Sigma$ is one, we can drop the assumption on convergence $h_n\to h$. When the genus of $\Sigma$ is zero,  we show that there is no branched immersion of $\Sigma$ as a Lagrangian shrinker, generalizing the rigidity result of \cite{Sm0} in dimension two by allowing branch points. 
\end{abstract}

\date{June 24, 2014}

\maketitle

\section{Introduction}
Let $\Sigma$ be an orientable smooth $m$ dimensional manifold without boundary and let $F$ be an immersion of $\Sigma$ into $\mathbb R^{m+k}$. The smooth mapping $F$ is called a {\sl self-shrinker} if it satisfies the equation
\begin{equation} \label{self similar eqn}
\vec H = -F^\perp,
\end{equation}
where $\vec H$ is the mean curvature vector and $F^\perp$ is the normal component of the position vector $F$. A self-shrinker gives rise to a solution, via the scaling  $F_t = \sqrt{1-2 t}\, F$ for $0 \leq t <1/2$, of the mean curvature flow (MCF) in ${\mathbb R}^{m+k}$:
\[\bigg(\frac{\partial F_t}{\partial t}\bigg)^\perp = \vec H(F_t).\]
MCF typically develops singularities in finite time. By parabolic rescaling at a singularity, Huisken's monotonicity formula \cite{H} shows that a subsequence of the mean curvature flow converges to a self-shrinker when the singularity is of type I. Ultimately, the classification of self-shrinkers would be important in understanding the MCF. 

When $m+k=2m$, we can identify $\mathbb R^{2m}$ with $\mathbb C^m$ and consider the Lagrangian immersions. It is shown in \cite{Sm1} that the Lagrangian condition is preserved along the MCF. Therefore, the shrinkers arise from Lagrangian MCF are Lagrangian submanifolds. When $m=1$, the Lagrangian condition is automatically satisfied by smooth curves and self shrinking solutions are studied in \cite{AL}. 

In this paper, we will restrict our attention to compact Lagrangian self shrinkers in ${\mathbb C}^2$. For arbitrary $m$, Smoczyk \cite{Sm0} showed that there is no Lagrangian self-shrinking immersion in $\mathbb C^m$ with first Betti number equals zero. In particular,  there is no immersed Lagrangian self shrinking sphere in $\mathbb C^2$. To establish compactness properties of the moduli space of compact Lagrangian shrinkers, it is crucial to generalize Smoczyk's result to {\it branched} Lagrangian immersions as the limit of a sequence of immersions may not be an immersion anymore. 

Indeed, the rigidity holds for branched Lagrangian shrinking spheres in ${\mathbb C}^2$, we have: 

\begin{thm} \label{no S2 lag shrinker}
There does not exist any branched conformal Lagrangian self-shrinking sphere in $\mathbb C^2$. 
\end{thm}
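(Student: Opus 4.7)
The plan is to extend Smoczyk's rigidity argument \cite{Sm0} for immersed Lagrangian self-shrinkers with $b_1=0$ to the branched setting; the new ingredients are control of the Lagrangian angle and the validity of integration by parts near branch points. Write $B \subset \Sigma = S^2$ for the finite branch set of $F : S^2 \to \mathbb{C}^2$, and let $g = F^*g_{\mathbb{C}^2}$ and $d\mu_g$ be the induced metric and area form, which are smooth on $\Sigma \setminus B$ and degenerate on $B$. On $\Sigma \setminus B$, define the Lagrangian angle $\beta$ via $F^*\Omega = e^{i\beta}\,d\mu_g$, where $\Omega = dz^1 \wedge dz^2$. The identity $\vec H = J\nabla_g\beta$ together with the shrinker equation $\vec H = -F^\perp$ gives $\nabla_g\beta = JF^\perp$, so $|\nabla_g\beta|_g = |F^\perp| \le |F|$ is bounded on the compact $\Sigma$. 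In a local conformal coordinate $z$ near a branch point of order $k$, $g = |F_z|^2|dz|^2$ with $|F_z| \sim |z|^{k-1}$, so the flat-metric gradient satisfies $|\nabla_0 \beta|_0 = |F_z|\cdot|\nabla_g\beta|_g \lesssim |z|^{k-1}$, and $\beta$ extends continuously across each branch point. The period of $d\beta$ around each $p \in B$ is then $O(\varepsilon^k) \to 0$, so it vanishes, and simple connectivity of $S^2$ lets us lift $\beta$ to a globally defined function $\beta : \Sigma \to \mathbb{R}$, smooth on $\Sigma \setminus B$.

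Next I would establish on $\Sigma \setminus B$ the drift equation
$$
\Delta_g \beta = \langle F, \nabla_g \beta \rangle.
$$
This is Smoczyk's evolution equation $\partial_t\beta = \Delta\beta$ for the Lagrangian angle along pure Lagrangian MCF, specialized at $t=0$ for the self-similar flow $F_t = \sqrt{1-2t}\,F$: since $\beta$ is scale-invariant it is constant in time in the self-similar parametrization, while that parametrization differs from pure MCF by a tangential reparametrization of velocity $F^T$ at $t=0$; comparing the two evolutions yields the identity. Multiplying by $\beta\,e^{-|F|^2/2}$ and integrating by parts on $\Sigma \setminus \bigcup_{p \in B} B_\varepsilon(p)$, the drift term cancels against $\nabla_g e^{-|F|^2/2} = -e^{-|F|^2/2}F^T$, leaving
$$
0 = -\int |\nabla_g\beta|^2\, e^{-|F|^2/2}\, d\mu_g + (\text{boundary terms on } \partial B_\varepsilon(p)).
$$
The $g$-arclength of $\partial B_\varepsilon(p)$ is $O(\varepsilon^k)$ and every other factor in the boundary integrand is bounded, so the boundary contribution is $O(\varepsilon^k) \to 0$. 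Hence $\nabla_g\beta \equiv 0$ and, by connectedness of $\Sigma \setminus B$, $\beta$ is constant.

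A constant $\beta$ forces $\vec H \equiv 0$ and hence $F^\perp \equiv 0$ on $\Sigma \setminus B$. Substituting into the standard shrinker identity $\Delta_g(|F|^2/2) = 2 - 2|F^\perp|^2$ gives $\Delta_g(|F|^2/2) \equiv 2$ on the regular locus. Applying the same boundary-vanishing trick to the smooth function $|F|^2/2$ yields
$$
0 = \int_\Sigma \Delta_g(|F|^2/2)\, d\mu_g = 2\,\mathrm{Area}_g(\Sigma),
$$
contradicting the positivity of the area of a non-constant branched immersion, and thereby ruling out any branched conformal Lagrangian self-shrinking $S^2$ in $\mathbb{C}^2$.

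The hard part will be the branch-point analysis in the first and third steps: one must confirm both that $\beta$ admits a continuous extension across $B$ (equivalently, that its period around each branch point vanishes) and that the boundary contributions on shrinking loops around branch points really do tend to zero. The metric degeneracy of $g$ at $B$ turns out to be a help rather than a hindrance, because the $g$-arclengths of small flat loops collapse rapidly enough to absorb any bounded integrand, so no stray boundary terms survive in the limit.
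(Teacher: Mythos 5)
Your proposal is correct in substance, but it takes a genuinely different route from the paper's. The paper never introduces the Lagrangian angle: it proves (Proposition \ref{a extend across B}) that the $L^\infty$ bound on $\alpha_H$ forced by the shrinker equation makes the closed form $\alpha_H$ a distributional solution of the first-order system across the branch points, upgrades this to a smooth extension by elliptic bootstrapping, writes the extension as $df$ using $H^1(\mathbb S^2)=0$, and kills $f$ by the strong maximum principle (now applicable even if the maximum sits at a branch point, because $f$ is smooth there). You instead stay entirely on the immersed locus with a primitive $\beta$ of $\alpha_H$, obtained by showing the periods around branch points vanish, and you replace the maximum principle by a weighted integration by parts: your drift equation $\Delta_g\beta=\langle F,\nabla_g\beta\rangle$ is exactly Lemma \ref{d^*a lemma} applied to $\alpha_H=d\beta$ (you should quote or reprove that lemma directly; the passage via Smoczyk's evolution equation of the angle ``specialized at $t=0$'' is only a heuristic as written), so $e^{-|F|^2/2}\nabla_g\beta$ is divergence free, and testing against $\beta$ on $\Sigma$ minus $\varepsilon$-disks yields $\int e^{-|F|^2/2}|\nabla_g\beta|^2\,d\mu_g=0$ because the $g$-length of each $\partial B_\varepsilon(p)$ tends to zero while $|\nabla_g\beta|_g=|F^\perp|\le|F|$ and $\beta$ stay bounded. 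Two minor points: the asymptotics $|F_z|\sim|z|^{k-1}$ at a branch point of order $k$ is asserted but not justified (it is not immediate from the paper's definition of a branched conformal immersion), yet your argument never needs it, since boundedness of the conformal factor already gives $O(\varepsilon)$ periods and boundary terms; and $\Delta_g|F|^2=2m-2|F^\perp|^2$ gives $\Delta_g(|F|^2/2)=2-|F^\perp|^2$ rather than $2-2|F^\perp|^2$, which is immaterial once $F^\perp\equiv0$. Both proofs hinge on the same key input, the pointwise bound on $\alpha_H$ coming from the shrinker equation; the paper's route buys the stronger, independently useful local statement that $\alpha_H$ extends smoothly across branch points on an arbitrary surface, while yours is more elementary (no distribution theory or elliptic regularity) and ends with the clean contradiction $0=\int_\Sigma\Delta_g(|F|^2/2)\,d\mu_g=2\,\mathrm{Area}_g(\Sigma)$ in place of the paper's appeal to the nonexistence of compact (branched) minimal surfaces in $\mathbb C^2$.
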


At any immersed point,  the mean curvature form $\alpha_H = \iota_{\vec H} \omega$ of the Lagrangian shrinker satisfies a pair of differential equations (\ref{da=0}) and  (\ref{d*a eqn}) that form a first order elliptic system. The key ingredient in the proof of Theorem \ref{no S2 lag shrinker} is to show that $\alpha_H$ extends smoothly across the branch points. The self-shrinker equation (\ref{self similar eqn}) yields a $L^\infty$ bound on  $\alpha_H$ and this is useful to show that $\alpha_H$ satisfies the first order elliptic system as a weak solution distributionally. Smoothness of the extended mean curvature form then follows from the elliptic theory. 
  
The main application of Theorem \ref{no S2 lag shrinker} is to derive compactness results of compact Lagrangian self-shrinkers. 

Let $\Sigma$ be a fixed compact oriented smooth surface and $F_n:\Sigma \to \mathbb C^2$ be a sequence of Lagrangian immersed self-shrinkers in $\mathbb C^2$.  Let $h_n$ be the Riemannian metric on $\Sigma$ which is conformal to the pull back metric $F_n^*\il \cdot, \cdot\li$  on $\Sigma$ such that either it has constant Gauss curvature $-1$ if the genus of $\Sigma$ is greater than one, or $(\Sigma , h_n)$ is $ \mathbb C / \{1, a+ bi\}$ with the flat metric, where $-\frac{1}{2} < a\leq \frac{1}{2}$, $b\geq 0$, $a^2 + b^2 \geq 1$ and $a\geq 0$ whenever $a^2 + b^2  =1$. It is well known that the Moduli space of the conformal structures on $\Sigma$ are parametrized by metrics of the above form. 

We now state our compactness result. 

\begin{thm} \label{main theorem}
Let $F_n :(\Sigma, h_n) \to \mathbb C^2$ be a sequence of conformally immersed Lagrangian self-shrinkers with a uniform area upper bound $\Lambda$. Suppose that the sequence of metrics $\{h_n\}$ converges smoothly to a Riemannian metric $h$ on $\Sigma$. Then there is a subsequence of $\{F_n\}$ that converges smoothly to a branched conformally immersed Lagrangian self-shrinkers $F_\infty :(\Sigma, h) \to \mathbb C^2$.
\end{thm}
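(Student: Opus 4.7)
The approach is an $\varepsilon$-regularity/point-picking argument combined with a bubbling analysis, the key new input being Theorem \ref{no S2 lag shrinker} which rules out the only possible bubble. I first derive uniform a priori bounds. Writing $F_n^* \il\cdot,\cdot\li = e^{2u_n} h_n$, the area hypothesis becomes $\int_\Sigma e^{2u_n}\, dv_{h_n} \leq \Lambda$. Using the self-shrinker equation $\vec H_n = -F_n^\perp$ together with a maximum-principle/entropy argument in the style of Colding-Minicozzi for closed shrinkers, one obtains a uniform $C^0$ bound $|F_n|_{L^\infty} \leq R = R(\Lambda)$, and hence $|\vec H_n|_{L^\infty} \leq R$. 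The Gauss equation combined with Gauss-Bonnet and the area bound then gives $\int_\Sigma |A_n|^2 \leq C(\Lambda, \Sigma)$.

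Next I apply an $\varepsilon$-regularity theorem of Choi-Schoen type, adapted to Lagrangian self-shrinkers via the $L^\infty$ bound on $\vec H_n$: there exists $\varepsilon_0 > 0$ such that whenever $\int_{B_r^h(p)} |A_n|^2 < \varepsilon_0$ on a sufficiently small $h$-ball, all higher derivatives of $F_n$ on the concentric half-ball are uniformly controlled. Because $h_n \to h$ smoothly no domain degeneration occurs, and the uniform $L^2$ bound on $|A_n|^2$ implies that at most $\lfloor C\Lambda/\varepsilon_0 \rfloor$ points of $\Sigma$ can fail this $\varepsilon$-regularity hypothesis at every scale. A standard diagonal extraction then produces a finite set $\mathcal{S} \subset \Sigma$ and a subsequence (still denoted $\{F_n\}$) with $F_n \to F_\infty$ in $C^\infty_{\mathrm{loc}}(\Sigma \setminus \mathcal{S};\, \mathbb{C}^2)$, where $F_\infty$ is $h$-conformal, Lagrangian, and satisfies $\vec H = -F_\infty^\perp$ on $\Sigma \setminus \mathcal{S}$.

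To show $\mathcal{S} = \emptyset$, fix $p \in \mathcal{S}$ and pick $p_n \to p$ with $u_n(p_n) \to \infty$. In a conformal chart for $h_n$ centered at $p_n$, set $r_n = e^{-u_n(p_n)} \to 0$ and consider the reparametrizations
\[
\tilde F_n(z) = F_n(r_n z), \qquad z \in B_{r_n^{-1}}(0) \subset \mathbb{C}.
\]
These remain conformal Lagrangian self-shrinkers (the equation $\vec H = -F^\perp$ is invariant under reparametrization), their images lie in $\overline{B_R(0)} \subset \mathbb{C}^2$, and $|d\tilde F_n(0)| = \sqrt{2}$ by construction. The $\varepsilon$-regularity of the previous step applies on the expanding balls $B_{r_n^{-1}}(0)$ and yields $\tilde F_n \to \tilde F_\infty$ in $C^\infty_{\mathrm{loc}}(\mathbb{C};\, \mathbb{C}^2)$, where $\tilde F_\infty$ is a nonconstant, bounded, conformal Lagrangian self-shrinker with total area $\leq \Lambda$. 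Boundedness and conformality give a removable singularity at $\infty$, so $\tilde F_\infty$ extends to a branched conformal Lagrangian self-shrinking immersion of $S^2$ into $\mathbb{C}^2$ --- contradicting Theorem \ref{no S2 lag shrinker}. Hence $\mathcal{S} = \emptyset$ and $F_n \to F_\infty$ smoothly on all of $\Sigma$; since $F_\infty$ is a smooth conformal limit of immersions with area $\leq \Lambda$, it is a branched conformal Lagrangian self-shrinker, as asserted.

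\emph{Main obstacle.} The principal technical difficulty is the bubble step: the rescaling must be chosen so that the bubble remains a self-shrinker (rather than a mere minimal limit, for which only the trivial rigidity would apply), and the removable-singularity statement at $\infty$ must hold in the branched conformal category. Both ultimately rest on the smooth extendability of the mean curvature form across branch points, the same analytic input that underpins Theorem \ref{no S2 lag shrinker}.
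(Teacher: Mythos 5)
Your overall architecture (bound the images, extract a limit away from a finite bad set, rescale at bad points to produce a Lagrangian shrinking sphere, and kill it with Theorem \ref{no S2 lag shrinker}) is the right shape, and it is close in spirit to the paper, which however runs the argument through harmonic-map theory: shrinkers are minimal for the conformal metric $G=e^{-|x|^2/2}\il\cdot,\cdot\li$, the bounded image is isometrically placed inside a compact manifold $(N,\bar g)$, and Parker's bubble-tree theorem for harmonic maps with $h_n\to h$ does the compactness, with bubbles automatically being Lagrangian shrinker spheres. The problem with your version is not the strategy but the quantity you base the concentration analysis on.

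The genuine gap is that you define the bad set $\mathcal S$ by failure of an $\varepsilon$-regularity hypothesis for $\int|A_n|^2$, but your bubble construction at $p\in\mathcal S$ requires energy (i.e.\ area/conformal-factor) concentration: you ``pick $p_n\to p$ with $u_n(p_n)\to\infty$'' and rescale by $r_n=e^{-u_n(p_n)}$. Curvature concentration and energy concentration are not the same thing here, and the regime where they differ is exactly the one the theorem must allow: if the limit $F_\infty$ has a genuine branch point at $p$ (which the statement permits), then by the Gauss--Bonnet quantization at branch points a definite amount of $\int|A_n|^2$ concentrates at $p$ while no area/energy concentrates there, so there are no points with $u_n(p_n)\to\infty$, no scale $r_n$ to blow up by, and no shrinking-sphere bubble to contradict Theorem \ref{no S2 lag shrinker}. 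In that regime your argument produces neither a contradiction nor convergence near $p$, since your $\varepsilon$-regularity also fails there; in fact an argument that concluded $\mathcal S=\emptyset$ in your sense would wrongly exclude branch points altogether. Relatedly, even where $\int|A_n|^2$ is small, a Choi--Schoen-type estimate controls the surfaces geometrically (pointwise $|A_n|$), not the parametrized maps $F_n$ in the fixed conformal gauge determined by $h_n$: to bound ``all higher derivatives of $F_n$'' you need an upper bound on the conformal factors $u_n$, which is a separate analytic step (Wente/H\'elein--Kuwert--Li-type estimates on the Liouville equation, using the global area bound), not a consequence of small total curvature alone. The paper's route sidesteps both issues: for conformal maps energy equals area, so the small quantity is $\int|dF_n|^2$, Sacks--Uhlenbeck small-energy estimates bound the maps directly, bubbles occur precisely at energy concentration points, rescaling only the domain keeps them Lagrangian shrinkers, and Theorem \ref{no S2 lag shrinker} then forces smooth convergence of the maps while still allowing the limit to be branched. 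Two smaller points: the uniform bound $|F_n|_{L^\infty}\le R(\Lambda)$ does not follow from the maximum principle alone (that only shows each $F_n$ meets $B_{\sqrt 2}$); the paper combines this with the identity $\mathcal W(F_n)=\tfrac12\mu(F_n)$ and Simon's diameter estimate. And you never verify that $F_\infty$ is nonconstant; the paper does this via the energy gap and energy identity (or the diameter lower bound), and some such argument is needed for the limit to be a branched immersion.
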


Note that  there is a universal positive lower bound on the extrinsic diameters for two dimensional compact shrinkers (cf. section 4). The limit $F_\infty$ cannot be a constant map.

The proof of Theorem \ref{main theorem} uses the observation that self-shrinkers are in fact minimal immersions into $(\mathbb C^2, G)$, where $G$ is a metric on $\mathbb C^2$ conformal to the Euclidean metric. 
The advantage of this viewpoint is that we are then able to use the bubble tree convergence of harmonic maps developed in \cite{P}. In particular, Theorem \ref{no S2 lag shrinker} shows that no bubble is formed during the process and thus the convergence is smooth.


It is interesting to compare Theorem \ref{main theorem} with the compactness results of Colding-Minicozzi \cite{CM1} on self-shrinkers in $\mathbb R^3$ and of Choi-Schoen \cite{CS}, Fraser-Li \cite{FL} on embedded minimal surfaces in three dimensional manifold $N$ (with or without boundary) with nonnegative Ricci curvature. In their cases, they use the local singular compactness theorem (see for example Proposition 2.1 of \cite{CM1}), which says that for any sequence of embedded minimal surfaces $\Sigma_n$ in $N$ with a uniform (global or local) area or genus upper bound, there is an embedded minimal surface $\Sigma$ such that a subsequence of $\Sigma_n$ converges smoothly and locally (with finite multiplicities) to $\Sigma$ away from finitely many points in $\Sigma$. A removable singularity theorem, which is based on the maximum principle 
and is true only in the codimension one case, is needed to prove the local singular compactness theorem. Hence similar statement is not available in higher codimensions. For embedded minimal surfaces in a 3-manifold, Colding and Minicozzi have proven deep compactness results (\cite{CM2} and the reference therein).

In the case of arbitrary codimension, the regularity of the limit of a sequence of minimal surfaces (as a stationary varifold  given by Geometric Measure Theory) is a subtle issue. Not much is known except for some special cases, such as Gromov's compactness theorem on pseudo holomorphic curves \cite{G}.

In our situation, we do not assume embeddedness. In fact, if we assume that  $F_n$ is a Lagrangian embedding, then by a result of Whitney (\cite{W}, see also \cite{ALP}), $\Sigma$ must be a torus as the self-intersection number $\Sigma\cdot\Sigma$ is twice of the genus of $\Sigma$ minus 2. When we treat the immersions $F_n$ as conformal harmonic mappings, even impose the assumption that the conformal structures induced from the immersions  stay in a compact domain of the Teichm\"uller space,  the sequence $F_n$ would subsequentially converge only to a {\it branched} immersion on each component in the bubble tree convergence. True branch points might exist for the limiting map $F_\infty$. On the other hand, if $\Lambda<16\pi$ in Theorem \ref{main theorem} then the Willmore energy of each immersion $F_n$ is less than $8\pi$ for shrinkers (cf. \cite{CL}), hence by an inequality of Li-Yau (Theorem 6 in \cite{LY}), $F_n$ must be an embedding and in turn $\Sigma$ is a torus since it is Lagrangian.

When $\Sigma$ has genus one, we can drop the assumption on the convergence of conformal structures in Theorem \ref{main theorem}. Moreover, if the area bound is strictly less than $16\pi$, the limiting surface has no branched points and we have smooth compactness.

\begin{thm} \label{main thm 2}
Let $F_n : T \to \mathbb C^2$ be a sequence of Lagrangian self-shrinking tori with a uniform area upper bound $\Lambda$. Then (i) there is a subsequence of $\{F_n\}$ that converges smoothly to a branched Lagrangian self-shrinking torus $F_\infty:T\to{\mathbb C}^2$, and (ii) when $\Lambda<16\pi$, $F_\infty$ is an embedding. 
\end{thm}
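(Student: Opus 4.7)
The plan is to reduce part (i) to Theorem \ref{main theorem} by showing that the conformal structures $\{h_n\}$ cannot degenerate in the moduli space of tori, and then to derive part (ii) from a Willmore bound combined with the Li-Yau inequality.

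For part (i), using the normalization of the introduction, the only obstruction to a subsequential smooth convergence $h_n \to h$ inside the moduli space is $b_n \to \infty$, corresponding to a pinching of the torus to a nodal $S^2$. I would assume this for contradiction. Following the viewpoint already used in the proof of Theorem \ref{main theorem}, regard $F_n$ as conformal harmonic maps $(T, h_n) \to (\mathbb{C}^2, G)$, where $G$ is the conformal Gaussian metric making self-shrinkers into minimal immersions; the area bound $\Lambda$ translates to a uniform energy bound. I would then apply Parker's bubble-tree convergence \cite{P}, now for harmonic maps with a degenerating conformal domain: the limit is a stable map whose smooth components are (possibly branched) harmonic maps from $S^2$ into $(\mathbb{C}^2, G)$, each of which corresponds to a branched conformal Lagrangian self-shrinking sphere in $\mathbb{C}^2$. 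By Theorem \ref{no S2 lag shrinker}, every such component must be a constant map. Therefore, along this subsequence, $F_n$ would converge in the bubble-tree sense to a constant map, contradicting the universal positive lower bound on the extrinsic diameter of two-dimensional compact shrinkers noted in the introduction. Hence $\{b_n\}$ is bounded, and a further subsequence satisfies $h_n \to h$ smoothly; Theorem \ref{main theorem} then delivers the desired smooth convergence $F_n \to F_\infty$ to a branched conformally immersed Lagrangian self-shrinking torus.

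For part (ii), the hypothesis $\Lambda < 16\pi$ together with the Willmore bound for Lagrangian shrinkers noted in the introduction (cf. \cite{CL}) gives $W(F_n) = \int_T |\vec H_n|^2 \, dA_n < 8\pi$ for every $n$. By the smooth convergence from part (i), the Willmore energy is lower semi-continuous in the limit, so $W(F_\infty) < 8\pi$. The Li-Yau inequality \cite{LY}, in its branched-immersion form $W(F) \ge 4\pi \mu$ where $\mu$ is the maximal multiplicity of $F$ (self-intersections and branching orders counted together), then forces $\mu = 1$ for $F_\infty$, so $F_\infty$ is an unbranched embedding.

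The main obstacle is the degeneration-ruling-out step in part (i): adapting Parker's bubble-tree convergence \cite{P} to the curved target $(\mathbb{C}^2, G)$ with a degenerating torus domain, carefully identifying every bubble and every smooth limit component with a branched conformal Lagrangian self-shrinker of $S^2$ so that Theorem \ref{no S2 lag shrinker} applies, and confirming that trivial components on every level of the bubble tree really do force the limiting map to be constant (so that the diameter lower bound produces the contradiction). Once compactness of the conformal structures is secured, part (i) is a direct application of Theorem \ref{main theorem}, and part (ii) is then a short consequence of the Willmore estimate and Li-Yau.
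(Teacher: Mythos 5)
Your overall strategy is the same as the paper's: assume the conformal structures on the torus degenerate, observe that pinching homotopically nontrivial loops (plus bubbling) forces every component of the limit to be a branched conformal Lagrangian self-shrinking sphere, kill all of these with Theorem \ref{no S2 lag shrinker}, and contradict the universal lower bound $\mathrm{diam}\,F_n(T)\ge\sqrt2$ coming from (\ref{W = A}) and (\ref{bound in diam}); then invoke Theorem \ref{main theorem}, and for (ii) use the Willmore bound $\mathcal W(F_n)=\tfrac12\mu(F_n)<8\pi$ together with Li--Yau. Part (ii) is essentially the paper's argument: what you call the ``branched-immersion form'' of Li--Yau is exactly what the paper gets by combining lower semicontinuity of $\mathcal W$ (Theorem 1 of \cite{CLi}) with Proposition 4.1/Theorem 3.1 of \cite{KLi} (no branch points when $\mathcal W<8\pi$) and then Theorem 6 of \cite{LY} (embeddedness); you should cite \cite{KLi} rather than attribute the branch-point statement to \cite{LY} alone, but the mathematics is the same.

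The genuine gap is the step you yourself flag as ``the main obstacle'': Parker's theorem \cite{P} is a bubble-tree convergence result for harmonic maps from a surface with a \emph{fixed} (or smoothly convergent) conformal structure into a \emph{compact} target, and it simply does not apply when $b_n\to\infty$, i.e.\ when the complex structures leave every compact subset of moduli space. Saying you would ``adapt'' it is not a proof; the entire content of Theorem \ref{main thm 2}(i) beyond Theorem \ref{main theorem} is precisely this degenerate-domain compactness. The paper fills this in with the results of \cite{CT} or \cite{CLi}: one needs (a) the isometric embedding of the bounded region $(U,G)$ into a compact manifold $(N,\bar g)$ (Lemma \ref{U embed in N}; the appendix extends Theorem 1 of \cite{CLi} from $\mathbb R^K$ to compact targets), so the bubble-tree machinery is legitimately available; (b) control of the neck regions created by the pinching --- here conformality of $F_n$ together with Proposition 2.6 of \cite{CT} shows the connecting geodesics have zero length, or alternatively the Hausdorff convergence and energy/Willmore identities of \cite{CLi} are used, so that the limit object really is a union of spheres with no extra neck pieces; (c) the Sacks--Uhlenbeck removable singularity theorem to compactify the collapsed cylinders into spheres and to see that each limit component is a (branched conformal, Lagrangian, by domain-only renormalization) harmonic map of $\mathbb S^2$ with zero Willmore energy in $N$, hence a branched Lagrangian shrinking sphere; and (d) the area/Willmore identity $\mathcal W(F_n)=\tfrac12\mu(F_n)$ plus Simon's diameter estimate to rule out convergence to a point. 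Without an input of the type (a)--(c) --- i.e.\ a compactness theorem valid for degenerating conformal structures --- your contradiction argument does not get off the ground, even though the intended conclusion and its use are exactly as in the paper.
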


Our strategy of proving Theorem \ref{main thm 2} is to rule out the possibility of degeneration of the conformal structures induced by $F_n$ on $T$. To do this we need the general bubble tree convergence results in \cite{CLi}, \cite{CT} that allow the conformal structures degenerate. The key observation is that when the conformal structures degenerate, some homotopically nontrivial closed curves in the torus are pinched to points. Thus the limiting surface is a finite union of spheres (arising from collapsion of the closed curves and from the bubbles at the energy concentration points) which are branched Lagrangian shrinkers, but Theorem \ref{no S2 lag shrinker} forbids this. The desired result then follows from Theorem \ref{main theorem}.

\section{Preliminaries}
Given an immersion $F : \Sigma \to \mathbb R^{m+k}$, let $g$ be the induced metric $F^*\il \cdot, \cdot\li$ on $\Sigma$. In local coordinates $(x^1. \cdots x^m)$ of $\Sigma$, the second fundamental form $A$ and the mean curvature vector $\vec H$ are given by 

$$ 
A_{ij}= \big(\partial_{ij} F \big)^\perp,\ \ \vec H = \sum_{i,j=1}^m g^{ij}A_{ij} = \Delta_g F 
$$
where $\Delta_g$ is the Laplace-Beltrami operator in the induced metric $g$. 

Let $G$ be a metric on $\mathbb R^{m+k}$ defined by 
\begin{equation} \label{dfn of G}
G(\cdot, \cdot) = e^{-\frac{|x|^2}{m}} \il \cdot , \cdot\li  .
\end{equation}

The following lemma is well-known and is proved by Angenent \cite{A} in the case of hypersurface. The proof can be generalized to arbitrary codimension.

\begin{lem} \label{self similar iff minimal wrt g}
The immersion $F$ satisfies equation (\ref{self similar eqn}) if and only if $F$ is a minimal immersion with respect to the metric $G$ (\ref{dfn of G}) on $\mathbb R^{m+k}$. 
\end{lem}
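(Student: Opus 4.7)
The plan is to invoke the standard transformation law for the mean curvature vector under a conformal change of the ambient metric, and then just substitute $\phi(x) = -|x|^2/(2m)$ so that $e^{2\phi} = e^{-|x|^2/m}$.

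First I would recall (or briefly derive) the conformal change formula. If $\tilde g = e^{2\phi} g$ are two metrics on the ambient space with Levi-Civita connections $\tilde\nabla$ and $\nabla$, the standard identity
\[
\tilde\nabla_X Y = \nabla_X Y + X(\phi)Y + Y(\phi)X - g(X,Y)\nabla\phi
\]
gives, upon taking the normal component along an immersion $F:\Sigma^m\to \mathbb R^{m+k}$ (the first two correction terms are tangential), the transformation of the second fundamental form
\[
\tilde A(X,Y) = A(X,Y) - g(X,Y)\,(\nabla\phi)^\perp.
\]
Tracing with respect to $\tilde h = e^{2\phi\circ F}h$ (note $\tilde h^{ij} = e^{-2\phi\circ F}h^{ij}$) and using that $g$- and $\tilde g$-orthogonal projections to $TF(\Sigma)$ coincide because $\tilde g$ is conformal to $g$, I obtain
\[
\vec H_{\tilde g} = e^{-2\phi\circ F}\bigl(\vec H_g - m\,(\nabla\phi)^\perp\bigr).
\]

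Next I would apply this with $\phi(x) = -|x|^2/(2m)$, so that $e^{2\phi} = e^{-|x|^2/m}$ is precisely the conformal factor in \eqref{dfn of G}. Since $\nabla\phi = -x/m$ in the Euclidean metric, we get $-m(\nabla\phi)^\perp = F^\perp$ along $F$, and therefore
\[
\vec H_G = e^{|F|^2/m}\bigl(\vec H + F^\perp\bigr).
\]
Because the conformal factor $e^{|F|^2/m}$ is nowhere zero, $\vec H_G \equiv 0$ if and only if $\vec H = -F^\perp$, which is exactly the self-shrinker equation \eqref{self similar eqn}. This proves both directions of the lemma.

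There is no real obstacle here beyond tracking the conformal factor correctly; the only thing to be careful about is the sign and the factor of $m$ in $\nabla\phi$, and the fact that $\tilde h^{-1}$ carries the inverse conformal factor so that the $g(X,Y)(\nabla\phi)^\perp$ term contributes exactly $m(\nabla\phi)^\perp$ after tracing. Everything else is an algebraic substitution.
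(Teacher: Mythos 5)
Your proposal is correct, and the computation checks out: with $\tilde g = e^{2\phi}g$ one indeed has $\tilde A(X,Y) = A(X,Y) - g(X,Y)(\nabla\phi)^\perp$ (the $X(\phi)Y+Y(\phi)X$ terms are tangential and the $g$- and $\tilde g$-normal projections agree), tracing with $\tilde h^{ij}=e^{-2\phi}h^{ij}$ gives $\vec H_{\tilde g}=e^{-2\phi}\bigl(\vec H - m(\nabla\phi)^\perp\bigr)$, and with $\phi=-|x|^2/(2m)$ this becomes $\vec H_G = e^{|F|^2/m}\bigl(\vec H + F^\perp\bigr)$, so $\vec H_G\equiv 0$ is equivalent to the shrinker equation. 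Note that the paper itself gives no proof of this lemma; it only cites Angenent for the hypersurface case and remarks that the argument generalizes to arbitrary codimension. Your direct conformal-transformation argument is exactly that generalization: it never uses codimension one (no unit normal or scalar mean curvature is needed, only the normal projection, which is codimension-independent), so it supplies the omitted details in a self-contained way. The only points deserving care are the ones you already flagged — the sign and the factor $m$ in $\nabla\phi=-x/m$ and the inverse conformal factor in $\tilde h^{ij}$ — and you have handled both correctly.
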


Next we consider immersions $F:\Sigma \to \mathbb C^m$. We identify $\mathbb R^{2m}$ with $\mathbb  C^m$ by $z^i = u^i + \sqrt{-1} v^i, i=1,\cdots,m$. The map $F$ is called a Lagrangian immersion if $F^*\omega=0$, where 
$$
\omega = \sum_{i=1}^m du^i \wedge dv^i
$$
is the standard symplectic form on $\mathbb C^m$. Let $J$ be the standard complex structure on $\mathbb C^m$. It is known that $\omega$, $J$ and $\il \cdot, \cdot \li$ on $\mathbb C^m$ are related by 
\begin{equation} \label{compatibility of g, w, J}
\il JX, JY\li = \il X, Y\li,\ \ \omega(X, Y) = \il JX, Y\li 
\end{equation}
for any $X,Y\in T_x{\mathbb C}^m, x\in {\mathbb C}^m$. Thus $F$ is Lagrangian if and only if $J$ sends the tangent vectors of $\Sigma$ to the normal vectors in ${\mathbb C}^m$. 

In particular, by equation (\ref{compatibility of g, w, J}), $J\vec H(x)$ is tangent to $F(\Sigma)$ at $x$ for any Lagrangian immersion $F$. The mean curvature form $\alpha_H$ is the 1-form on $\Sigma$ defined by:  for all $x\in \Sigma$ and $Y\in T_x \Sigma$, 
\begin{equation} \label{a dfn}
\alpha_H (Y)= \omega (\vec H (x), (F_*)_xY)= g (J\vec H (x), (F_*)_xY ).
\end{equation}
Let $d$ denotes the exterior differentiation of $\Sigma$. It is shown in \cite{D} that $\alpha_H$ is closed:
\begin{equation} \label{da=0}
d\alpha_H = 0\ .
\end{equation}

If $F$ is also a self-shrinker, we have the following equation for $\alpha_H$ (\cite{CL}, p.1521). For the convenience of the reader, we include a proof. 

\begin{lem}  \label{d^*a lemma} Let $F$ be a Lagrangian self-shrinker. Then $\alpha_H$ satisfies
\begin{equation} \label{d*a eqn}
d^* \alpha_H =  -\frac 12 \alpha_H (\nabla_g |F|^2) ,
\end{equation}
where $d^*$ is the formal adjoint of $d$ on $\Sigma$ with respect to $g$. 
\end{lem}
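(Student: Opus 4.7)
The plan is a direct coordinate computation on $\Sigma$, using the self-shrinker equation $\vec H = -F^\perp$ and the Lagrangian condition $\omega(F_i, F_j) = 0$, where $F_i = \partial_i F$. Writing $\alpha_j = \omega(\vec H, F_j)$ and using $d^*\alpha_H = -g^{ij}(\partial_i\alpha_j - \Gamma^k_{ij}\alpha_k)$, I would first differentiate $\alpha_j$; the piece $g^{ij}\omega(\vec H,\, F_{ij} - \Gamma^k_{ij}F_k)$ equals $\omega(\vec H, \Delta_g F) = \omega(\vec H,\vec H)$, which vanishes by antisymmetry of $\omega$. This reduces the problem to showing $-g^{ij}\omega(\partial_i\vec H, F_j) = -\tfrac{1}{2}\alpha_H(\nabla_g|F|^2)$.

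Next, I would substitute $\vec H = -F + F^\top$, and decompose $F^\top = a^l F_l$ with $a^l = g^{lk}\langle F, F_k\rangle$. One immediately recognizes $a^l = \tfrac{1}{2}(\nabla_g|F|^2)^l$, so that $\alpha_H(\nabla_g|F|^2) = 2 a^l \alpha_l$ and the target becomes $d^*\alpha_H = -a^l\alpha_l$. Expanding $\partial_i\vec H = -F_i + (\partial_i a^l) F_l + a^l F_{il}$ and pairing with $F_j$ via $\omega$, the Lagrangian condition kills all terms except $a^l\omega(F_{il}, F_j)$.

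The final task is therefore to show $g^{ij}\omega(F_{il}, F_j) = \alpha_l$. I would obtain this by differentiating the Lagrangian identity $\omega(F_l, F_j) = 0$ to get $\omega(F_{il}, F_j) = -\omega(F_l, F_{ij})$, tracing with $g^{ij}$, and noting that in the decomposition $g^{ij}F_{ij} = \vec H + g^{ij}\Gamma^k_{ij}F_k$ the tangential Christoffel part is killed again by the Lagrangian condition. What remains is $-\omega(F_l, \vec H) = \alpha_l$. Combining everything yields $d^*\alpha_H = -a^l\alpha_l = -\tfrac{1}{2}\alpha_H(\nabla_g|F|^2)$.

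I do not anticipate a real obstacle here: the argument is essentially bookkeeping. The one point demanding care is consistent use of the Lagrangian condition in both its original form and its once-differentiated form $\omega(F_{ik}, F_j) + \omega(F_i, F_{jk}) = 0$, so that the normal part of $F$, which by the self-shrinker equation is $-\vec H$, gets converted cleanly into the tangential gradient $\tfrac{1}{2}\nabla_g|F|^2$ that appears on the right-hand side of (\ref{d*a eqn}).
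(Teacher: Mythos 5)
Your proposal is correct: the sign conventions ($d^*\alpha_H=-g^{ij}\nabla_i(\alpha_H)_j$), the cancellation $\omega(\vec H,\Delta_g F)=\omega(\vec H,\vec H)=0$, the identification $a^l=\tfrac12(\nabla_g|F|^2)^l$, and the traced identity $g^{ij}\omega(F_{il},F_j)=(\alpha_H)_l$ (from differentiating $\omega(F_l,F_j)=0$ and using $g^{ij}F_{ij}=\vec H+g^{ij}\Gamma^k_{ij}F_k$ together with the Lagrangian condition) all check out, and the pieces assemble to $d^*\alpha_H=-a^l(\alpha_H)_l=-\tfrac12\alpha_H(\nabla_g|F|^2)$. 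The organization differs from the paper's, though the ingredients are the same (shrinker equation, Lagrangian condition, $\vec H=\Delta_g F$). The paper applies the shrinker equation \emph{before} differentiating: it rewrites $(\alpha_H)_i=\langle J\vec H,\partial_iF\rangle=-\langle JF^\perp,\partial_iF\rangle=-\langle JF,\partial_iF\rangle$, the tangential part dropping because $JF^\top$ is normal; then a one-line computation in normal coordinates at a point gives $d^*\alpha_H=\sum_i\left(\langle J\partial_iF,\partial_iF\rangle+\langle JF,A_{ii}\rangle\right)=\langle JF,\vec H\rangle=-\alpha_H(F^\top)$, so the derivative of $\vec H$ never appears. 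You instead differentiate $\omega(\vec H,F_j)$ directly, which forces you to expand $\partial_i\vec H$ and to invoke the once-differentiated Lagrangian identity $\omega(F_{il},F_j)=-\omega(F_l,F_{ij})$ to convert $g^{ij}\omega(F_{il},F_j)$ back into $(\alpha_H)_l$. The paper's substitution-first order buys a shorter argument; your version avoids normal coordinates, keeps the Christoffel bookkeeping explicit, and isolates the mildly reusable fact that $g^{ij}\omega(\partial_i\partial_lF,\partial_jF)$ recovers the mean curvature form for any Lagrangian immersion.
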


\begin{proof}
In local coordinates, 
$$
\alpha_H = \sum_{i=1}^m(\alpha_H)_i dx^i
$$ 
where the coefficients are given by 
\begin{eqnarray*}
(\alpha_H)_i &=&  \il J\vec H, \partial_i F\li \\
&=& -\il JF^\perp , \partial _i F\li  \\
&=& -\il JF, \partial _i F\li.
\end{eqnarray*}
Note that we have used the self-shrinker equation (\ref{self similar eqn}) in the second equality. Now fix a point $p\in \Sigma$ and take the normal coordinates at $p$. At $p$, $g_{ij} = g^{ij} = \delta_{ij}$ and $\Gamma_{ij}^k = 0$. We now calculate at $p$,  
\[\begin{split}
d^*\alpha_H &= -\sum_{i,j=1}^m g^{ij} (\alpha_H)_{i;j} \\
&= - \sum_{i=1}^m (\alpha_H)_{i,i} \\
&=  \sum_{i=1}^m \partial_i \il JF, \partial _i F \li \\
&=  \sum_{i=1}^m \left(\il J \partial_i F, \partial _i F \li  + \il JF , \partial^2_{ii} F \li  \right)
\end{split}\]
Using $J \partial_i F \perp \partial _i F$ and $\partial^2_{ii} F = A_{ii}$ for each $i$ at $p$, 
\[
\begin{split}
d^*\alpha_H &=  \sum_{i=1}^m \il JF, A_{ii} \li \\
&= \il JF,\vec H \li \\
&= -\il F, J\vec H\li \\
&= - \alpha_H (F^\top) \\
&= -\frac 12 \alpha_H(\nabla_g |F|^2),
\end{split}
\]
where $F^\top$ is the tangential component of $F$ and we have used 
$$
F^\top=\sum^m_{i=1}\langle F, \partial_i F\rangle \,\partial_i F = \frac{1}{2}\nabla_g|F|^2
$$ 
at $p$. 
\end{proof}

\section{ Lagrangian conformal branched immersion}
From now on we consider $m=2$. Let $(\Sigma, g_0)$ be a smooth Riemann surface. A smooth map $F :\Sigma \to \mathbb C^2$ is called a branched conformal immersion if 
\begin{enumerate}
\item there is a discrete set $B \subset \Sigma$ such that 
\[F: \Sigma \setminus B \to \mathbb C^2\]
 is an immersion,
\item there is a function $\lambda : \Sigma \to [0,\infty)$ such that $g:= F^* \il \ , \ \li = \lambda  g_0$ on $\Sigma$, where $\il \ , \ \li$ is the Euclidean metric on $\mathbb C^2$ and $\lambda$ is zero precisely at $B$, and
\item the second fundamental form $A$ on $\Sigma\setminus B$ satisfies $|A|_g \in L^2 (K\setminus B, d\mu)$ for any compact domain $K$ in $\Sigma$, where $|\cdot |_g$ and $d\mu$ are respectively the norm and the area element with respect to $g$ (Note $g$ defines a Riemannian metric on $\Sigma\setminus B$ but not on $\Sigma$).
\end{enumerate}

Elements in $B$ are called the branch points of $F$. A conformal branched immersion $F:\Sigma \to \mathbb C^2$ is called Lagrangian (resp. self-shrinking) if it is Lagrangian (resp. self-shrinking) when restricted to $\Sigma\setminus B$. Note that when $F$ is Lagrangian, the mean curvature form $\alpha_H$ is defined only on $\Sigma \setminus B$. 

The following proposition is the key result on removable singularity of $\alpha_H$. Note that in this proposition we do not assume $\Sigma$ to be closed. 

\begin{prop} \label{a extend across B}
Let $F:\Sigma \to \mathbb C^2$ be a branched conformal Lagrangian self-shrinker with the set of branch points $B$. Then there is a smooth one form $\tilde \alpha$ on $\Sigma$ which extends $\alpha_H$ and $d\tilde \alpha=0$ on $\Sigma$. 
\end{prop}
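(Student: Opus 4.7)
The plan is to work locally in a conformal chart around each branch point, construct a bounded primitive $h$ of $\alpha_H$ on the punctured chart, and then bootstrap the scalar linear elliptic PDE that the shrinker equation imposes on $h$ across the puncture. Fix $p\in B$ and choose isothermal coordinates $(x^1,x^2)$ centered at $p$ in which $g_0$ is the flat Euclidean metric on a small disk $D\subset\mathbb R^2$; in these coordinates $g=F^*\il\cdot,\cdot\li=\lambda(dx^1\otimes dx^1+dx^2\otimes dx^2)$, where $\lambda\ge 0$ is smooth and vanishes only at $0$. The identity $(\alpha_H)_i=-\il JF,\partial_i F\li$ extracted from the proof of Lemma \ref{d^*a lemma} gives $|(\alpha_H)_i|\le |F|\cdot|\partial_i F|$; since $F$ is smooth on all of $\Sigma$, this immediately yields $(\alpha_H)_i\in L^\infty(D)$. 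From the $L^\infty$ bound, the period of $\alpha_H$ around $p$ vanishes, because $|\int_{\partial B_r(0)}\alpha_H|\le 2\pi r\,\|\alpha_H\|_\infty\to 0$, and combined with $d\alpha_H=0$ on $D\setminus\{0\}$ this produces a smooth single-valued function $h$ on $D\setminus\{0\}$ with $dh=\alpha_H$ and $|\nabla h|\in L^\infty$; such an $h$ extends to a Lipschitz function on $D$ lying in $H^1(D)$.

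Next, I would translate (\ref{d*a eqn}) into a scalar PDE for $h$. Using $\sqrt{|g|}=\lambda$ and $g^{ij}=\lambda^{-1}\delta^{ij}$, a direct computation gives $d^*_g dh=-\lambda^{-1}\Delta h$ and $dh(\nabla_g|F|^2)=\lambda^{-1}\nabla h\cdot\nabla|F|^2$, so the $\lambda^{-1}$ cancels on both sides and (\ref{d*a eqn}) simplifies to
\[
\Delta h \;=\; \tfrac12\,\nabla h\cdot\nabla|F|^2 \qquad\text{on } D\setminus\{0\},
\]
a linear second-order elliptic equation with smooth coefficients. To upgrade this classical equation to a weak equation on all of $D$, I would fix $\varphi\in C^\infty_c(D)$ and a standard cutoff $\psi_r$ that is $0$ on $B_r(0)$, $1$ outside $B_{2r}(0)$, with $|\nabla\psi_r|\le C/r$. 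Testing the classical equation against $\psi_r\varphi\in C^\infty_c(D\setminus\{0\})$ and letting $r\to 0$, the only potentially obstructive term is $\int\nabla h\cdot(\nabla\psi_r)\varphi$, which is controlled by $\|\nabla h\|_\infty\|\varphi\|_\infty\cdot(C/r)\cdot|B_{2r}\setminus B_r|=O(r)\to 0$. Hence $h\in H^1(D)$ is a weak solution of $\Delta h=\tfrac12\,\nabla h\cdot\nabla|F|^2$ on all of $D$.

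At this stage standard linear elliptic regularity finishes the argument: the equation has smooth coefficients and its right-hand side is linear in $\nabla h$, so the usual bootstrap ($H^1\to H^2\to\cdots$, followed by Schauder) gives $h\in C^\infty(D)$, and $\tilde\alpha:=dh$ is a smooth one-form on $D$ that equals $\alpha_H$ on $D\setminus\{0\}$. Repeating this construction around each point of the discrete set $B$ and patching with $\alpha_H$ on $\Sigma\setminus B$ produces a global smooth one-form $\tilde\alpha$ on $\Sigma$, and closedness $d\tilde\alpha=0$ is automatic since $d\tilde\alpha$ is continuous and vanishes on the dense open set $\Sigma\setminus B$ by (\ref{da=0}). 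The main technical point I expect to be delicate is the cut-off step: one has to verify that the $L^\infty$ bound on $\alpha_H$ (equivalently on $\nabla h$) is strong enough to rule out a Dirac-type contribution at the puncture in the limiting distributional equation, so that no extra source term survives; everything after that reduces to classical elliptic regularity.
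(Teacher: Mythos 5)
Your argument is correct, and it reaches the conclusion by a route that differs from the paper's in an interesting way, although the crucial removable-singularity step (the cutoff $\psi_r$ with $|D\psi_r|\le C/r$, killed by the $L^\infty$ bound on $\alpha_H$ since the annulus has area $O(r^2)$) is the same in spirit. The paper never introduces a primitive near the branch point: it keeps the two coefficients $a,b$ of $\alpha_H$, shows that the first-order system $d\alpha_H=0$, $\mathrm{div}(\alpha_H)=\frac12\alpha_H(D|F|^2)$ holds distributionally on the full disk, and then decouples it by clever choices of test functions ($\phi=\partial_x\psi$, $\partial_y\psi$) into two Poisson equations $\Delta a=\partial_x P$, $\Delta b=\partial_y P$ whose right-hand sides lie only in $H^{-1}_{loc}$, so the bootstrap must start from Folland's local regularity theorem at that low level. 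You instead integrate first: the $L^\infty$ bound forces the period of $\alpha_H$ around the puncture to vanish (the period is $r$-independent by Stokes and is $O(r)$), so you get a single-valued Lipschitz primitive $h$ across the puncture, and Lemma \ref{d^*a lemma} becomes one scalar equation $\Delta h=\frac12\,Dh\cdot D|F|^2$ with bounded right-hand side; the bootstrap is then the completely standard linear one ($W^{2,p}$/Schauder), and closedness of $\tilde\alpha=dh$ is automatic from local exactness rather than needing a separate density argument. What your route costs is the extra (correct) observations that the period vanishes and that a function with bounded gradient on the punctured disk extends to a Lipschitz $H^1$ function whose weak gradient is the pointwise one; what it buys is a simpler elliptic step and, as a bonus, it is exactly the local version of the trick the paper uses globally on $\mathbb S^2$ in the proof of Theorem \ref{no S2 lag shrinker}. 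Do make sure, when you patch, to note that the local extension at each branch point is unique by continuity, so the locally defined $\tilde\alpha$'s glue with $\alpha_H$ into a well-defined global smooth closed form; your final sentences already contain this.
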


\begin{proof}
The result is local, so it suffices to consider $\Sigma$ to be the unit disc $\mathbb D$ with a branch point at the origin only. Let $(x, y)$ be the local coordinate of $\mathbb D$. We write $\alpha_H  = a dx + bdy$ for some smooth function $a$ and $b$ on the punctured disk $\mathbb D^* = \{ z\in \mathbb D: z\neq 0\}$. Let
\begin{equation} \label{dfn of delta and D}
\text{div}( \alpha_H) =   \frac{\partial a}{\partial x} + \frac{\partial b}{\partial y}   , \ \ \ \ D  = \bigg(\frac{\partial }{\partial x}, \frac{\partial }{\partial y} \bigg)
\end{equation}
be the divergence and the gradient with respect to $\delta_{ij}$. As $F$ is conformal, $g_{ij} = \lambda \delta_{ij}$, where $\lambda = \frac{1}{2}| DF|^2$ and $\delta_{ij}$ is the euclidean metric on $\mathbb D$. By restricting to a smaller disk if necessary, we assume that the image $|F|$ and $\lambda$ are bounded.  By equation (\ref{self similar eqn}) we have $$|\vec H| \leq |F|,$$ where $|\cdot|$ is taken with respect to $\il \cdot, \cdot\li$ on $\mathbb C^2$. As $g = F^*\il \cdot, \cdot\li$, using equation (\ref{a dfn}) we see that 
$$
|\alpha_H|_g \leq C\,\,\text{ on }\mathbb D^*.
$$
 Using $g_{ij} = \lambda \delta_{ij}$ and $n=2$, we have 
$$
\nabla_g = \lambda^{-1} D\, \, \text{and}\, \,d^*_g  = - \lambda^{-1} \text{div}.
$$ 
Hence equation (\ref{d*a eqn}) is equivalent to
\begin{equation} \label{d*a eqn euclidean}
\text{div}( \alpha_H) =  \frac 12 \alpha_H (D |F|^2 ).
\end{equation}
Moreover, as $|\alpha_H|_g = \sqrt{\lambda^{-1} (a^2 + b^2)}$, we also have 
$$|a|, |b| \leq  |\alpha_H|_g \sqrt\lambda\leq C\sqrt\lambda.$$
Thus $|a|, |b|$ are bounded on $\mathbb D^*$. To simplify notations, let
$$P = \frac 12 \alpha_H (D|F|^2).$$
Note that $P$ is also bounded on $\mathbb D ^*$. 

Both equations (\ref{da=0}) and (\ref{d*a eqn euclidean}) are satisfied pointwisely in $\mathbb D^*$. We now show that they are satisfied in the sense of distribution on $\mathbb D$. That is, for all test functions $\phi \in C^\infty_0(\mathbb D)$, 
\begin{equation} \label{da=0 dist}
\int_\mathbb{D} \alpha_H \wedge d\phi = 0
\end{equation}
and 
\begin{equation} \label{d* a eqn dist}
\int_\mathbb{D} \alpha_H (D\phi) dxdy =  - \int_\mathbb{D}P \phi dxdy .
\end{equation}
Note that all the integrands in equation (\ref{da=0 dist}) and (\ref{d* a eqn dist}) are integrable, since $a, b$ and $P$ are in $L^\infty (\mathbb D)$. 

First we show (\ref{da=0 dist}). Let $\psi_r \in C^\infty_0(\mathbb D)$, $r<1/2$, be a cutoff function such that $0\leq \psi \leq 1$,  $|D\psi| \leq 2/r$ and
\[\psi(x) = \begin{cases} 1 & \text{when }|x|\geq 2r,\\
 0 & \text{when } |x|\leq r.
\end{cases}\]
Then $\phi \psi_r \in C^\infty_0(\mathbb D^*)\cap C^\infty(\mathbb D)$. Using Stokes' theorem and equation (\ref{da=0}), we have
$$
 0 =\int_{\mathbb D ^*} d (\phi\psi_r \alpha_H) =  \int_{\mathbb D^*} d(\phi \psi_r)\wedge   \alpha_H  .
$$
This implies 
\begin{equation} \label{da =0 dist r}
 \int_{\mathbb D^* } \psi_r  \alpha_H \wedge d\phi= -\int_{\mathbb D^*} \phi  \alpha_H \wedge  d\psi_r .
\end{equation}
Since $\psi_r \to 1$ on $\mathbb D^* $ as $r\to 0$ and $\alpha_H$, $d\phi$ are bounded,
$$
\lim_{r\to 0} \int_{\mathbb D^*} \psi_r \alpha_H \wedge d\phi = \int_{\mathbb D^*} \alpha_H \wedge d\phi =\int_\mathbb{D} \alpha_H \wedge d\phi
$$
by Lebesque's dominate convergence theorem. To estimate the right hand side of equation (\ref{da =0 dist r}), note that as $d\psi_r$ has support on $\mathbb D_{2r} \setminus \mathbb D_r$, where $\mathbb D_s$ denotes the disk of radius $s$. Hence
\begin{equation} \label{estimate of da = 0}
\bigg| \int_\mathbb{D} \phi \alpha_H \wedge d\psi_r \bigg|\leq \frac{4 C \sup |\phi|}{r} \int_{\mathbb D_{2r}\setminus \mathbb D_r} dx  = 12\pi C \sup|\phi| r \to 0
\end{equation}
as $r \to 0$. Thus equation (\ref{da=0 dist}) holds.

To show equation (\ref{d* a eqn dist}), we use the same cutoff function $\psi_r$. Then $\phi \psi_r \in C^\infty_0(\mathbb D^*)\cap C^\infty(\mathbb D)$. By the divergence theorem, 
$$
0 = \int_{\mathbb D^*} \text{div}(\phi\psi_r \alpha_H) \,dxdy 
= \int_{\mathbb D^*} \alpha_H \big( D(\phi\psi_r ) \big) \, dxdy + \int_{\mathbb D^*} \phi\psi_r \text{div}(\alpha_H) \,dxdy .
$$
Now we use equation (\ref{d*a eqn euclidean}) to conclude 
\begin{equation} \label{d*a eqn dist proof}
-\int_{\mathbb D^*} P\phi\psi_r \,dxdy = \int_{\mathbb D^*} \psi_r \alpha_H (D\phi) \,dxdy + \int_{\mathbb D^*} \phi \alpha_H (D\psi_r)\,dxdy .
\end{equation}
Similarly, we can estimate the second term on the right hand side of equation (\ref{d*a eqn dist proof}) as for equation (\ref{estimate of da = 0}):
\begin{equation}\label{estimate of d^*a}
\bigg| \int_{\mathbb D^*} \phi \alpha_H (D\psi_r)\,dxdy\bigg| \leq  12\pi C \sup|\phi| r .
\end{equation}
Using Lebesque's dominate convergence theorem again, we can set $r\to 0$ in equation (\ref{d*a eqn dist proof}) to arrive at equation (\ref{d* a eqn dist}). 

Writing $\alpha_H = a dx + bdy$, equations (\ref{da=0 dist}) and (\ref{d* a eqn dist}) are equivalent to 
\begin{equation} \label{first dist eqn}
\int_\mathbb{D} \bigg( a \frac{\partial \phi}{\partial y} - b\frac{\partial \phi}{\partial x} \bigg) dxdy = 0 ,
\end{equation}
\begin{equation} \label{second dist eqn}
\int_\mathbb{D} \bigg( a \frac{\partial \phi}{\partial x} + b\frac{\partial \phi}{\partial y} \bigg) dxdy = -\int_\mathbb{D}P \phi \, dxdy , 
\end{equation}
for any test functions $\phi \in C_0^\infty(\mathbb D)$. 

For any $\psi \in C^\infty_0(\mathbb D)$, set $\phi = \frac{\partial \psi}{\partial y}$ in equation (\ref{first dist eqn}), $\phi = \frac{\partial \psi}{\partial x}$ in equation (\ref{second dist eqn}) and cancel the cross term $b\frac{\partial ^2 \psi}{\partial x\partial y}$ by taking summation of the two, we have 
\[
\int_\mathbb{D} a \,\Delta \psi \,dxdy=  -\int_\mathbb{D} P\, \frac{\partial \psi}{\partial x} \,dxdy 
\]
where $\Delta$ is the Laplace operator in the euclidean metric on ${\mathbb D}$. 

Similarly, set $\phi = \frac{\partial \psi}{\partial x}$ in equation (\ref{first dist eqn}), $\phi = \frac{\partial \psi}{\partial y}$ in equation (\ref{second dist eqn}) and take the difference of the two equations, we obtain
\[
\int_\mathbb{D} b \,\Delta \psi \,dxdy=  -\int_\mathbb{D} P\, \frac{\partial \psi}{\partial y} \,dxdy .
\]

We conclude now that $a$ and $b$ satisfy
\begin{equation} \label{a, b dist eqn}
\begin{split}
\Delta \,a &= \frac{\partial P}{\partial x} , \\ 
\Delta \,b &= \frac{\partial P}{\partial y} 
\end{split}
\end{equation}
on $\mathbb D$ in the sense of distribution. Now we apply the elliptic regularity theory for distributional solutions. As $F$ is smooth and $\alpha\in L^\infty (\mathbb D)$, we have $P \in L^2(\mathbb D)$. Hence the right hand side of equation (\ref{a, b dist eqn}) is in $H^{loc}_{-1}(\mathbb D)$. By the local regularity theorem (\cite{F}, Theorem 6.30), we have $a, b\in H_1^{loc} (\mathbb D)$. This implies $P \in H_1^{loc} (\mathbb D)$. Using this, we see that the right hand side of equation (\ref{a, b dist eqn}) is in $H_0^{loc}(\mathbb D)$. By the same local regularity theorem again, these implies $a, b\in H_2^{loc }(\mathbb D)$. Thus we can iterate this argument and see that $a, b \in H_s^{loc}(\mathbb D)$ for all integers $s$. By the Sobolev embedding theorem we have $a, b \in C^\infty (\mathbb D)$. Hence $\alpha_H$ can be extended to a smooth one form $\tilde \alpha$ on $\mathbb D$ and $d\tilde \alpha=0$ is satisfied on $\mathbb D$. 
\end{proof}

Now we proceed to the proof of Theorem \ref{no S2 lag shrinker}. 

\begin{proof}
 Let $F :\mathbb S^2 \to \mathbb C^2$ be a branched conformal Lagrangian self-shrinker with branch points $b_1, \cdots, b_k$. By Proposition \ref{a extend across B}, there is a smooth 1-form $\tilde \alpha$ on $\mathbb S^2$ such that $\tilde \alpha = \alpha_H$ on $\mathbb S^2\setminus B$. As $\tilde \alpha$ is closed and the first cohomology group of $\mathbb S^2$ is trivial, there is a smooth function $f$ on $\mathbb S^2$ such that $df = \tilde \alpha$. By equation (\ref{d^*a lemma}), $f$ satisfies
\begin{equation} \label{eqn of f}
 \Delta_g f = \frac 12 \,df (\nabla_g |F|^2)
\end{equation}
on $\mathbb S^2\setminus B$. Note that this equation is elliptic but not uniformly elliptic on $\mathbb S^2\setminus B$. By the strong maximum principle, the maximum of $f$ cannot be attained in $\mathbb S^2\setminus B$ unless $f$ is constant. Let $b\in B$ be a point where $f$ attains its maximum. Let $\mathbb D$ be a local chart around $b$ such that $g_{ij} = \lambda \delta_{ij}$ on $\mathbb D ^*$. As $\Delta_g = \lambda^{-1}\Delta$ and $\nabla_g = \lambda^{-1} D$, equation (\ref{eqn of f}) can be written  
\begin{equation} \label{eqn of f eu}
\Delta f =  \frac 12 \,df(D|F|^2 )\ \ \ \  \text{on }\mathbb D^* .
\end{equation}
As $f$ is smooth on $\mathbb D$, equation (\ref{eqn of f eu}) is in fact satisfied on $\mathbb D$. By the strong maximum principle, $f$ is constant as $f$ has an interior maximum at $b$. Hence $\alpha_H = 0$ and $\vec H=0$. This implies that $F$ is a branched minimal immersion in $\mathbb C^2$, which is not possible as $\mathbb S^2$ is compact.
\end{proof}

\section{Proof of Theorem \ref{main theorem}}
Let $F_n$ be a sequence of Lagrangian self-shrinkers which satisfies the hypothesis in the theorem. We need the following two lemmas for proving convergence of  $F_n$. 

The first lemma asserts that the images of $F_n$, for all $n$,  stay in a bounded region whose size depends only on the area bound $\Lambda$. In the following we use $B_r$ to denote the open ball in $\mathbb C^2$ with radius $r$ centered at the origin.  

\begin{lem} \label{F in U} Let $F_n$ be a sequence of two dimensional self-shrinkers in ${\mathbb R}^4$ with a uniform area upper bound. 
Then the image of $F_n$ lies in $B_{R_0}$, where $R_0$ depends only on the area upper bound. 
\end{lem}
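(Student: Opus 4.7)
The plan is to combine integral identities coming from the self-shrinker equation with a Moser-type iteration in the drift-weighted measure.

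First I would record the relevant identities. For any $m$-dimensional immersion $\Delta_g F = \vec H$, and using $\vec H = -F^\perp$ one computes
\[ \Delta_g |F|^2 \;=\; 2m - 2|F^\perp|^2 \]
and, for the drift operator $L := \Delta_g - \tfrac12 \langle F, \nabla_g \cdot\rangle$,
\[ L|F|^2 \;=\; 2m - |F|^2 - |F^\perp|^2, \]
since $\nabla_g|F|^2 = 2F^\top$ and $\langle F,F^\top\rangle = |F^\top|^2$. Integrating the first identity over the closed $\Sigma$ yields $\int_\Sigma |F^\perp|^2\,d\mu = m\,\mathrm{Area}(\Sigma) \le m\Lambda$; integrating the second against the weighted measure $e^{-|F|^2/4}\,d\mu$, with respect to which $L$ is formally self-adjoint for the shrinker (as the tangential part of $\nabla e^{-|F|^2/4} + \tfrac12 F e^{-|F|^2/4}$ vanishes), yields
\[ \int_\Sigma \bigl(|F|^2 + |F^\perp|^2\bigr)\, e^{-|F|^2/4}\,d\mu \;=\; 2m\int_\Sigma e^{-|F|^2/4}\,d\mu \;\le\; 2m\Lambda. \]

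Next, at a point $p$ where $|F|$ attains its maximum $R$, the identity $\nabla_g|F|^2 = 2F^\top$ forces $F^\top(p)=0$, so $|F^\perp(p)|=R$ and $|\vec H(p)|=R$ by the shrinker equation; in particular $|\vec H|=|F^\perp|\le |F|\le R$ holds pointwise on $\Sigma$. I would then upgrade the weighted integral bounds above to a pointwise bound $R\le R_0(\Lambda)$ either by running a Moser iteration in the drift $L^2$ space --- using the pointwise control $|\vec H|\le |F|$ and the self-adjointness of $L$ as a substitute for the usual elliptic framework --- or by applying a Michael--Simon mean-value inequality for immersed $2$-surfaces with mean curvature controlled by $R$, iterated over a covering of $\Sigma$ by small extrinsic balls.

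The main obstacle I anticipate is that $|F|^2$ is not a subsolution of any clean elliptic equation, since $\Delta_g |F|^2 = 2m - 2|F^\perp|^2$ is sign-indefinite, and a direct maximum-principle argument only yields the (well-known) lower bound $R^2 \ge m$. Consequently the iteration has to be carried out with the Gaussian weight $e^{-|F|^2/4}$ doing essential work, in the spirit of Colding--Minicozzi's analysis of codimension-one shrinkers. A probably cleaner alternative, which I would also try, is to exploit Lemma \ref{self similar iff minimal wrt g}: after the conformal change $G = e^{-|x|^2/2}\langle\cdot,\cdot\rangle$, each $F_n$ is a minimal immersion into $(\mathbb C^2, G)$, a Riemannian space of finite total volume $\pi^2$ and finite diameter whose single ideal point at infinity corresponds to $|x|\to\infty$. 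A mean-value inequality for minimal surfaces in $(\mathbb C^2, G)$, combined with the $G$-area bound $\int_{\Sigma_n} e^{-|F_n|^2/2}\,d\mu_n \le \Lambda$, should then force the images of the $F_n$ to stay uniformly away from the ideal boundary, which in turn bounds $\sup |F_n|$ by a constant depending only on $\Lambda$.
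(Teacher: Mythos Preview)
Your proposal is not a proof but a plan, and the plan misses the elementary route the paper takes. You never actually carry out the Moser iteration, and you yourself note the obstruction: $|F|^2$ is not a subsolution of anything useful for an upper bound, and the drift-weighted argument would require a Sobolev inequality in the Gaussian-weighted space that you do not supply. The alternative via minimality in $(\mathbb C^2,G)$ is likewise only a hope: the metric $G$ is incomplete at infinity, so the mean-value inequality you would like to invoke is not off-the-shelf, and you give no indication how to prove it.

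The paper's argument is much shorter and uses two ingredients you already have in hand. First, from $\Delta_g|F|^2 = 2m - 2|F^\perp|^2$, at a \emph{minimum} of $|F|^2$ one has $F^\top=0$ and $\Delta_g|F|^2\ge 0$, hence $|F|^2\le m$ there; so every $F_n$ meets the fixed ball $B_{\sqrt m}$. Second, your own integration of $\Delta_g|F|^2$ gives the Willmore identity $\mathcal W(F_n)=\tfrac12\mu(F_n)$. Feeding this into Simon's extrinsic diameter estimate
\[
\operatorname{diam} F_n(\Sigma)\;\le\; C\,\bigl(\mu(F_n)\,\mathcal W(F_n)\bigr)^{1/2}
\]
yields $\operatorname{diam} F_n(\Sigma)\le C'\Lambda$. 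A set of diameter at most $C'\Lambda$ that intersects $B_{\sqrt m}$ is contained in $B_{R_0}$ with $R_0=\sqrt m + C'\Lambda$. No iteration, no weighted Sobolev theory, no analysis near the ideal boundary of $(\mathbb C^2,G)$ is needed.
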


\begin{proof}
It is proved in \cite{CL} that  for any $m$ dimensional self-shrinker $F :\Sigma^m \to \mathbb R^{m+k}$.
\begin{equation} \label{Delta |F|^2}
\Delta_g |F|^2 =  - 2|F^\perp|^2 + 2m .
\end{equation} 
 When $\Sigma$ is compact, there is a point $p\in \Sigma$ where $|F|^2$ attains its minimum. Then we have 
$$
\nabla_g |F|^2 = 2 F^\top = 0\ \  \text{  and  }\ \ \Delta_g F \geq 0 \ \ \text{  at  }p.
$$ 
This implies 
$$
0 \leq -2|F|^2 +2m \ \ \text{at $p$}.
$$
Hence $|F|^2 \leq m$ at $p$. Thus each $F_n$ must intersect the ball with radius $\sqrt 2$. 

On the other hand, we integrate equation (\ref{F in U}) and use equation (\ref{self similar eqn}) to get 
(also derived in \cite{CL}) 
\begin{equation} \label{W = A}
\mathcal W (F_n) :=\frac{1}{4} \int_\Sigma |H_n|^2 d\mu_n = \frac{1}{2} \mu (F_n) ,
\end{equation}
where $\mu(F_n)$ is the area of the immersion $F_n : \Sigma \to \mathbb R^{m+k}$. 

Next we use Simon's diameter estimate (\cite{S}, see also (2.1) in \cite{KLi}), which says that there is a constant $C$ such that 
\begin{equation} \label{bound in diam}
  \bigg(\frac{\mu (F_n)}{ \mathcal W(F_n)}\bigg)^{\frac{1}{2}} \leq  \text{diam}F_n(\Sigma) \leq C\big(  \mu (F_n) \mathcal W(F_n)\big)^{\frac{1}{2}} .
\end{equation}
Here diam$F_n(\Sigma)$ is the extrinsic diameter given by 
\[
\text{diam}F_n(\Sigma) := \sup_{x, y\in \Sigma} |F_n(x) - F_n(y)|.
\]
As the areas are uniformly bounded, the images of $F_n$ all lie in $B_{R_0}$ for some $R_0$ depending only on the area upper bound. 
\end{proof}

Let $U = B_{R_0+1}$ endowed with the metric $G$ given by
\begin{equation} \label{conformal metric on C^2}
G = e^{-\frac{|x|^2}{2}} \il \cdot, \cdot\li  .
\end{equation} 

The next lemma enables us to apply the results in \cite{P}  for harmonic maps into a compact Riemannian manifold. 

\begin{lem} \label{U embed in N}
There is a compact Riemannian manifold $(N, \overline g)$ such that $(U, G)$ isometrically embedded into $(N, \overline g)$. 
\end{lem}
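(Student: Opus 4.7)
The plan is to compactify $(U,G)$ by modifying $G$ outside $U$ to agree with the flat Euclidean metric and then taking a quotient by a lattice, thereby producing a flat torus with a localized conformal bump into which $(U,G)$ embeds isometrically. The construction is clean because the conformal factor $e^{-|x|^2/2}$ is smooth and positive on all of $\mathbb{R}^4$; the only thing required is to cut off the conformal factor smoothly and match it to the Euclidean metric outside a slightly larger ball.

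Concretely, I would pick a smooth cutoff $\chi:\mathbb{R}^4 \to [0,1]$ with $\chi \equiv 1$ on $\overline{B_{R_0+1}}$ and $\chi \equiv 0$ outside $B_{R_0+2}$, and define the smooth Riemannian metric
\[
\tilde G = e^{-\chi(x)|x|^2/2}\, \langle \cdot, \cdot \rangle
\]
on $\mathbb{R}^4$. Then $\tilde G$ agrees with $G$ on $U$ and with $\langle\cdot,\cdot\rangle$ outside $B_{R_0+2}$. Next, fix $L>R_0+2$, set $\Gamma=(2L\mathbb{Z})^4$, and let $\pi:\mathbb{R}^4\to N:=\mathbb{R}^4/\Gamma$ be the quotient map, so $N$ is diffeomorphic to $T^4$. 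Let $\hat G$ denote the $\Gamma$-periodic extension of $\tilde G|_Q$ from the fundamental cube $Q=[-L,L]^4$ to all of $\mathbb{R}^4$. Since the modification is supported in $B_{R_0+2}$, which lies strictly inside $Q$, the metric $\tilde G$ coincides with the $\Gamma$-invariant Euclidean metric on an open neighborhood of $\partial Q$; consequently $\hat G$ is smooth and $\Gamma$-invariant, and descends to a smooth Riemannian metric $\bar g$ on the compact manifold $N$.

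Finally, the composite $U\hookrightarrow Q\xrightarrow{\pi} N$ is smooth and injective because $\mathrm{diam}(U)=2(R_0+1)<2L$ and nonzero elements of $\Gamma$ have norm at least $2L$; it is isometric because the pullback of $\bar g$ by this map equals $\tilde G|_U=G|_U$ by construction. I do not foresee any genuine obstacle: the only point requiring verification is the smoothness of the periodic extension $\hat G$ across the faces of the translated cubes, which is automatic from the compact support of $\chi$ inside $Q$. The statement is essentially a soft compactification, designed to bring the self-shrinkers into the setting of harmonic maps into a compact target required for Parker's bubble tree convergence in the sequel.
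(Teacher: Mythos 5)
Your proof is correct, and it takes a slightly different route from the paper. The paper compactifies by forming the one-point compactification of $\mathbb C^2$ (gluing $B_{R_0+2}$ to a small ball $B_d$ via the inversion $y=x/|x|^2$, so $N$ is topologically $\mathbb S^4$) and then defines $\overline g=\rho_1 G+\rho_2 g_1$ with a partition of unity subordinate to this cover, where $g_1$ is an arbitrary metric on the cap at infinity; since $\rho_2$ is supported in the region $|x|>R_0+1$, one still has $\overline g=G$ on $B_{R_0+1}$. You instead cut off the conformal factor $e^{-|x|^2/2}$ itself, obtaining a metric on $\mathbb R^4$ that is equal to $G$ on $U$ and Euclidean outside $B_{R_0+2}$, and then pass to the flat torus $\mathbb R^4/(2L\mathbb Z)^4$ with $L>R_0+2$; smoothness of the periodic extension is immediate because the metric is Euclidean near the faces of the fundamental cube, and injectivity and isometry on $U$ follow from $\mathrm{diam}(U)<2L$ and $\tilde G|_U=G$. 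Both constructions are soft and deliver exactly what is used later, namely a compact target $(N,\overline g)$ with $\overline g=G$ on a neighborhood of the images of the shrinkers; your version has the minor advantage of avoiding the auxiliary arbitrary metric $g_1$ and the inversion chart (everything stays conformally flat and explicit), while the paper's version keeps the target simply connected ($\mathbb S^4$ rather than $T^4$), though no such topological property of $N$ is needed for the bubble tree argument.
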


\begin{proof}
Let $d = \frac{1}{R_0+1}$ and $N$ is the disjoint union of $B_{R_0 + 2}$ and $B_d$, with the identification that $x\sim y$ if and only if $y = \frac{x}{|x|^2}$ by the inversion. The manifold $N$ is compact, as it can be identified as the one point compactification of $\mathbb C^2$ via the stereographic projection.  Let $g_1$ be any metric on $B_d$. Let $\rho_1, \rho_2 \in C^\infty(N)$ be a partition of unity subordinate to the open cover $\{ B_{R_0+2}, B_d \}$ in $N$ and define a Riemannian metric on $N$ by $$\overline g = \rho_1 G + \rho_2 g_1.$$ As 
$$B_{R_0+2} \cap B_d = \{x \in B_{R_0+2}:  R_0+1 < |x| < R_0+2\},$$
$\bar g = \rho_1 G + \rho_2 g_1 = G$ on $B_{R_0+1}\subset B_{R_0+2}$. Thus the inclusion $U\subset B_{R_0+1} \subset N$ is an isometric embedding of $U$. 
\end{proof}

Now we are ready to prove Theorem \ref{main theorem}.

\begin{proof} As $F_n(\Sigma) \subset U$ for all $n$, we also treat $F_n$ as a map with image in $N$. As $F_n$ is a conformal minimal immersion with respect to the metric $h_n$ on $\Sigma$ and $G = \bar g$ on $U$, $F_n :(\Sigma , h_n) \to (N, \bar g)$ is a sequence of harmonic maps. The area of $F_n(\Sigma)$ in $(N,\bar{g})$ is 
$$
\tilde{\mu}(F_n) = \int_\Sigma e^{-\frac{|F_n|^2}{2}}d\mu_{F_n^*\langle\cdot,\cdot\rangle}.
$$
Therefore, as $e^{-\frac{|F_n|^2}{2}} \leq 1$ we have 
$$
\tilde \mu (F_n) \leq \mu (F_n)<\Lambda. 
$$
As $F_n:(\Sigma,h_n)\to (N,\bar g)$ is conformal, the area $\tilde \mu (F_n)$ in $N$ is the same as the Dirichlet energy:
\begin{equation*}
E_{h_n,\bar g}(F_n)=\tilde \mu (F_n) <\Lambda.
\end{equation*}
By assumption, the sequence of metrics $h_n$ converges to $h$. Hence we can apply the theory of bubble tree convergence of harmonic maps developed in \cite{P}. In particular, by passing to subsequence if necessary, there is a finite (possibly empty) set $S\subset \Sigma$  and a harmonic map $F_\infty :(\Sigma, h) \to (N, \bar g)$ such that $F_n \to F_\infty$ locally in $C^\infty(\Sigma \setminus S)$ \footnote{In \cite{SU1}, it is shown that the convergence is in locally $C^1(\Sigma\setminus S)$. As each $F_n : (\Sigma , h_n) \to (\mathbb C^2, G)$ is harmonic and $h_n \to h$, by the standard elliptic estimates (Chapter 6 of \cite{GT}) and a bootstrapping argument, there are constants $C(m)$ such that 
$$
||F_n||_{C^m} \leq C(m)
$$
for all $n \in \mathbb N$ . Using Arzela-Ascoli Theorem and picking a diagonal subsequence, one shows that a subsequence of $\{F_n\}$ converges locally smoothly to a smooth mapping $\Sigma \to \mathbb C^2$, which is $F_\infty$. }. For each $x \in S$, there is a finite sequence of nontrivial harmonic maps $s_{x, i} :\mathbb S^2 \to N$, $i=1, \cdots, n_x$ such that the renormalized maps $F_{n, I}$ (See \cite{P} for the details) of $F_n$ converges to the harmonic bubble tree map formed by $F_\infty$ and $s_{x,i}$. In particular, we have the Hausdorff convergence 
$$
F_n(\Sigma) \to F_\infty(\Sigma)\cup \bigcup_{x\in S}  \bigcup_{i} s_{x, i} (\mathbb S^2).
$$
As a result, $F_\infty$ and $s_{x,i}$ all have image in $\overline U \subset N$. 

The harmonic maps $s_{x,i}: \mathbb S^2 \to \overline U$ are branched conformal minimal immersions. From the construction of bubble tree convergence \cite{P},  a bubble at $x\in \Sigma$ is constructed by considering the renormalization 
$$
\tilde f_n(z) := f_n(\exp_x( \lambda_n z + c_n)),
$$ 
where $c_n, \lambda_n \to 0$ as $n\to\infty$. As the normalization is only on the domain, $\tilde f_n$ are also Lagrangian. The regularity results in \cite{SU} show that there is a finite set $\tilde S \subset \mathbb C$ such that $\tilde f_n$ converges locally in $C^\infty(\mathbb C \setminus \tilde S)$ to a harmonic map $\tilde f_\infty : \mathbb S^2 \to N$. Thus the bubble $\tilde f_\infty$ is Lagrangian. This applies to each level of the bubble tree, so all bubbles formed are also Lagrangian. Using Lemma \ref{U embed in N} and Lemma \ref{self similar iff minimal wrt g}, each $s_{x, i}$ is a branched Lagrangian self-shrinker in $\mathbb C^2$. By Theorem \ref{no S2 lag shrinker}, such an immersion does not exist. Thus there is no bubble formed in the convergence process (that is, $S$ is empty), and the convergence $F_n \to F_\infty$ is in $C^\infty(\Sigma)$. 

By Theorem 3.3 in \cite{SU}, as each $F_n : \Sigma \to N$ is nontrivial by definition, there is an $\epsilon >0$ such that $E_{h_n,\bar g}(F_n) \geq \epsilon$ for all $n\in \mathbb N$. Using the energy identity \cite{J}, we have 
\begin{equation}
 E_{h_n,\bar g}(F_n) \to E_{h,\bar g}(F_\infty) \ \ \ \text{as }n\to \infty .
\end{equation}
Hence $E_{h,\bar g}(F_\infty) \geq \epsilon$ and $F_\infty$ is nontrivial (One can also use the estimate of diameter and the Hausdorff convergence as in the proof of Theorem \ref{main thm 2} in the next section to show that $F_\infty$ is nontrivial).

As the convergence $F_n \to F_\infty$ is in $C^\infty(\Sigma)$ and the metrics $h_n$ converges to $ h$ smoothly, the harmonic map $F_\infty : (\Sigma, h) \to (N, \bar g)$ is also conformal. Thus, $F_\infty$ is a branched conformal minimal immersion from $(\Sigma,h)$ into $(N,\bar g)$. It is also a Lagrangian immersion. Since the image of $F_\infty$ lies in $\overline U$, we also view $F_\infty$ as a branched Lagrangian minimal immersion in $\mathbb C^2$ with respect to the metric $G$ defined by (\ref{conformal metric on C^2}). By Lemma \ref{self similar iff minimal wrt g}, $F_\infty$ is then a Lagrangian branched conformal self-shrinker. 
\end{proof}

\section{Proof of Theorem \ref{main thm 2}}
\begin{proof} Let $\{F_n\}$ be a sequence of immersed Lagrangian self-shrinking tori in $\mathbb C^2$ with a uniform area upper bound. Let $h_n$ be the metric on the torus $T$ which is conformal to $F_n^*\il \cdot, \cdot\li$ and with zero Gauss curvature. If we can show that $h_n$ stays in a bounded domain of the Teichm\"uller space, then (i) follows from Theorem \ref{main theorem}. 

Using Lemma \ref{F in U}, Lemma \ref{U embed in N} and Lemma \ref{self similar iff minimal wrt g}, we also treat each $F_n$ as a minimal immersion in $N$, this means that $F_n:(T, h_n)\to (N,\bar{g})$ is conformal and harmonic. Assume the contrary that the conformal structures degenerate. In this case, there is a mapping $\hat{F}_\infty$ from $\Sigma_\infty$ to $N$  and the  image $F_n(T)$ converges in the Hausdorff distance to $\hat F_\infty(\Sigma_\infty)$ in $N$. Here $\Sigma_\infty$ is a stratified surfaces $\Sigma_\infty = \Sigma_0 \cup \Sigma_b$ formed by the {\it principal component} $\Sigma_0$ and {\it bubble component} $\Sigma_b$. The principal component $\Sigma_0$ is formed by pinching several closed, homotopically nontrivial curves in $T$ and the bubbling component is a union of spheres. There are no necks between the components since $F_n$ is conformal. The map $\hat F_\infty$ is continuous on $\Sigma_\infty$  and harmonic when restricted to each component of $\Sigma_\infty$. Since all the components intersect each others possibly at finitely many points, $\hat F_\infty$ is harmonic except at a finite set  $\hat S$. 

Since the conformal structures determined by the metrics $h_n$ degenerate, at least one homotopically nontrivial closed curve must be pinched to a point as $n\to\infty$. It follows that $\Sigma_0$ is a finite union of $\mathbb S^2$'s. Each of these 2-spheres is obtained by adding finitely many points to the cylinder ${\mathbb S}^1\times{\mathbb R}$ that comes from pinching one or two homotopically nontrivial loops: two at the infinity and at most finitely many at the blowup points of the sequence $F_n$, by the removable singularity theorem of Sacks-Uhlenbeck \cite{SU}. Therefore, $\hat F_\infty$ is a finite union of harmonic mappings $\hat F_\infty^i$ from the sphere to $N$. 

Since $F_n$ are conformal, there are no necks between the components. The bubble tree convergence described above are given by the results in \cite{CT} or \cite{CLi}. In \cite{CT}, the limiting surface is a stratified surface with geodesics connecting the two dimensional components. But together with conformality of each $F_n$ and Proposition 2.6 in \cite{CT}, one sees that all the geodesics involved have zero length. Alternatively, we can use the compactness theorem in \cite{CLi} which says: Suppose that $\{f_k\}$ is a sequence of $W^{2,2}$  branched conformal immersions of $(\Sigma, h_k)$ in a compact manifold $M$. If
$$
\sup_k\left\{\mu(f_k)+W(f_k)\right\}<+\infty
$$
then  either $\{f_k\}$ converges to a point, or
there is a stratified sphere $\Sigma_\infty$ and a $W^{2,2}$ branched conformal immersion
$f_\infty:\Sigma_\infty\to M$,
such that a subsequence of
$\{f_k(\Sigma)\}$ converges to  $f_\infty(\Sigma_\infty)$
in the Hausdorff topology, and the area and the Willmore energy satisfy
$$
\mu(f_\infty)=\lim_{k\rightarrow+\infty}\mu(f_k)\,\, \mbox{and} \,\, W(f_0)\leq\lim_{k\rightarrow+\infty} W(f_k).
$$
 The conditions of the theorem are satisfied by the sequence $\{F_n\}$ as the area $\tilde \mu (F_n)$ and the Willmore energy $W(F_n)$ in $N$ (which is zero as each $F_n$ is minimal immersion in $N$) are uniformly bounded. In our situation, $F_k$ will not converges to a point since $\text{diam}(F_n(T)) \geq \sqrt 2$ by equations (\ref{bound in diam}) and (\ref{W = A}). Thus the inequality on the Willmore energies in $N$
$$
W(\hat F_\infty)\leq\lim_{n\to\infty}W(F_n) = 0
$$
implies that the limiting $\mathbb S^2$'s are all branched minimal surfaces in $N$. 

Consequently, the images $F_n(T)$ converge in the Hausdorff distance to the image of finitely many harmonic maps $\mathbb S^2 \to N$. These harmonic maps are conformal branched immersions, which are also Lagrangian by similar reasons as in the proof of Theorem \ref{main theorem}. By Theorem \ref{no S2 lag shrinker}, all these harmonic maps are trivial. Hence, the images $F_n(T)$ converge in the Hausdorff distance to a point in $\mathbb C^2$. Again, this is impossible by the diameter estimate $\text{diam}(F_n(T)) \geq \sqrt 2$. This contradiction shows that the conformal structures cannot degenerate and that finishes the proof of (i).

To show (ii), we assume $F_n\to F_\infty$ smoothly by (i). Note that when the area upper bound $\Lambda<16\pi$, by equation (\ref{W = A}), the Willmore energy ${\mathcal W}(F_n)$ of $F_n$ in $({\mathbb R}^4, \langle \cdot,\cdot\rangle)$ admits an upper bound $\Lambda/2<8\pi$. By Theorem 1 in \cite{CLi},
$$
{\mathcal W}(F_\infty)\leq \lim_{n\to\infty}{\mathcal W}(F_n),
$$ 
thus the limiting Lagrangian shrinker $F_\infty$ also has Willmore energy strictly less than $8\pi$. By Proposition 4.1 in \cite{KLi} (also, Theorem 3.1 there), the limiting mapping $F_\infty: \Sigma \to \mathbb C^2$ has no branch points, therefore it is an immersion. By Theorem 6 in \cite{LY}, the immersion $F_\infty$ must be an embedding. 
\end{proof}

\bibliographystyle{amsplain}

\end{document}